\documentclass[12pt]{article}
\usepackage{amsmath}
\usepackage{amssymb}
\usepackage{amsthm}
\usepackage[utf8]{inputenc}
\usepackage[english]{babel}
\usepackage{graphicx}
\usepackage{rotating}
\usepackage{array}
\newtheorem{theorem}{Theorem}[section]

\newtheorem{lemma}[theorem]{Lemma}
\newtheorem{conj}{Conjecture}
\newcommand\pfree{\mathop{\mbox{$(q-1)$-$\mathit{free}$}}}
\newcommand\ufree{\mathop{\mbox{$u$-$\mathit{free}$}}}

\newcommand\llfree{\mathop{\mbox{$l_1$-$\mathit{free}$}}}
\newcommand\lllfree{\mathop{\mbox{$l_2$-$\mathit{free}$}}}

\newcommand\F{\mathbb{F}_q}

\begin{document}
\title{Primitive Values  of Rational Functions at Primitive Elements of  a Finite Field }
\author{Stephen D. Cohen$^a$ \thanks{The first author is  Emeritus Professor of Number Theory, University of Glasgow.}, Hariom Sharma$^b$, Rajendra Sharma$^b$}
	
	\date{}
	\maketitle
	\begin{center}
		\textit{$^a$ 6 Bracken Road, Portlethen, Aberdeen AB12 4TA,  UK\\
                     $^b$ Department of Mathematics, Indian Institute of Technology Delhi, New Delhi, 110016, India}
	\end{center}
	\begin{abstract} Given a prime power $q$ and an integer $n\geq2$, we establish a sufficient condition for the existence of  a  primitive pair $(\alpha,f(\alpha))$  where  $\alpha \in \mathbb{F}_q$  and $f(x) \in \mathbb{F}_q(x)$  is a rational function of degree $n$. (Here  $f=f_1/f_2$, where $f_1, f_2$ are coprime polynomials of degree $n_1,n_2$, respectively, and $n_1+n_2=n$.)  For any $n$, such a pair is guaranteed to exist for sufficiently large $q$.  Indeed, when $n=2$,  such a pair definitely  does  {\em not} exist  only  for 28 values of $q$ and possibly (but unlikely) only for  at most $3911$ other values of $q$.
\end{abstract}
	
	\textbf{Keywords:} Finite Fields, Characters, Primitive element\\
	2010 Math. Sub. Classification: 12E20, 11T23
	\footnote{emails: Stephen.Cohen@glasgow.ac.uk (Stephen)\\
                     hariomsharma638@gmail.com (Hariom), rksharmaiitd@gmail.com (Rajendra)}
	
\section{Introduction}
Throughout this article let $q$ be a prime power and $n \ (\geq2)$ be a positive integer. We use $\mathbb{F}_{q}$ to  denote the  finite field of order $q$ and $\mathbb{F}_{q}^*$ for  the cyclic group of nonzero multiplicative elements of $\mathbb{F}_{q}$. A generator of the cyclic group $\mathbb{F}_{q}^*$ is called \textit{a primitive element} of $\mathbb{F}_{q}$.  For a rational function $f(x)\in \mathbb{F}_{q}(x)$ and $\alpha \in \mathbb{F}_q$, we call  a pair $(\alpha, f(\alpha))\in \mathbb{F}_{q} \times \mathbb{F}_{q}$  a \textit{primitive pair} in $\mathbb{F}_{q}$ if both $\alpha$ and $f(\alpha)$ are primitive elements in $\mathbb{F}_{q}$. Primitive elements have many  applications in cryptography, see \cite{PPR10}.  The security of many cryptographic schemes (e.g.,  Diffie-Hellmen key exchange and Elgamel encryption scheme) relies on the computational intractability of finding solutions to Discrete Logarithm Problem, which uses primitive elements as its fundamental tool.

Broadly, our aim in this article is to classify finite fields for which there exists a primitive pair in $\F$ for general rational functions $f(x) \in \F(x)$.  In order to make this more precise, we introduce some terminology and conventions.

First, to say that a polynomial $f(x)\in \F[x]$ has {\em degree} $n \geq 0$ we mean that  $f(x)=a_nx^n+ \cdots+a_0$, where $a_n \neq 0$; in particular, $f$ is non-zero.  Next, let $f(x)=f_1(x)/f_2(x)$ be a rational function in $\mathbb{F}_q(x)$, where $f_1, f_2$ are polynomials of degree $n_1,n_2$, respectively.  In our study, we always assume $f$  is expressed in its lowest terms, i.e., $f_1$ and $f_2$ are coprime in which case call the function $f$ as described an  {\em  $ (n_1,n_2)$-function} having whose   degree  is $\deg(f)=n_1+n_2$.
 Observe that$ (\alpha, f(\alpha))$ is a primitive pair if and only if  and only if $ (\alpha,(1/f)(\alpha))$ is a  primitive pair.  Hence, replacing $f$ by $1/f$, if necessary, we can suppose $n_1\geq n_2$.   Further, we can divide each of $f_1$ and $f_2$ by the leading coefficient of $f_2$ and suppose that $f_2$ is monic.

Finally, we introduce a minor restriction on the shape of $f$ to avoid some exceptional or awkward cases, namely,  we suppose that $f$ is {\em not exceptional}, i.e.,  {\em not} of the form  $f(x) = cx^jg^d(x)$, where $j$ is any integer (positive, negative or zero),   $d >1$  divides $q-1$ and $c\in \F^*$,  for any rational function $g(x) \in \F(x)$.  As way of explanation, we observe first that if $f(x)=g^d >1(x)$, where $d>1$ divides $q-1$, then $f(\alpha)$ necessarily is a $d$th power and therefore cannot be primitive.   Further, for example, if $f(x) =cxg^2(x)$, where $c$ is a non-square in $\F$, then, if $\alpha$ is primitive (and so a non-square), then $f(\alpha)$ is a square and so necessarily not primitive.

The question of the existence of primitive pairs has previously been considered in various cases of rational  functions.   For instance, Cohen \cite{PPR5} solved the existence problem for the specific (1,0)-function $x+1$ and, in \cite{PPR15}, Cohen et al. identified  all finite fields for which there exists a primitive pair for every standard (1,0)-function (i.e., linear polynomial).

Recently,  Booker et al. \cite{PPR2} classified all finite fields for  which  there exists a primitive pair for every (non-exceptional)  (2,0)-function, i.e., quadratic polynomials (not of the form $c(x+\beta)^2$ for  $c \in \F^*, \beta \in \F$).

Wang et al. \cite{PPR12} and Cohen \cite{PPR8} studied the existence problem for primitive pairs in respect of the specific (2,1)-function $ (x^2+1)/x$ for fields of even order and, more recently, Cohen et al. \cite{PPR14} ( Corollary 2) provided  a complete solution for the (2,1)-functions $(x^2 \pm1)/x$.

Anju and Sharma \cite{PPR3} supplied  a sufficient condition  for the existence of primitive pairs for the general (2,1)-function.  (See also \cite{PPR16}.)  Recently in \cite{PPR4}, Sharma, Ambrish and Anju  established a similar sufficient condition for the general (2,2)-function.

 In this paper, we take $f(x)$ to be a  general rational  function of degree $n$  and prove the existence of primitive pairs $(\alpha, f(\alpha))$ in $\mathbb{F}_{q}$  for sufficiently large prime powers $q$.   To make this more precise, for each positive integer $n$, let $R_n$ be the set of non-exceptional  rational functions $f= f_1/f_2,$  (with $f_1, f_2$  coprime and $f_2$  monic)   of degree $n$ (where $n=n_1+n_2$ and  $n_1 \geq n_2$) and define $Q_n$  as the set of prime powers
 $q$  such that, for each  $f \in R_n$, there exists a  primitive pair  $(\alpha, f(\alpha)), \alpha \in \F$.  For any positive integer define $W(m) = 2^{\omega(m)}$, where $\omega(m)$ is the number of distinct prime divisors of $m$. (Thus, $W(m)$ is the number of square-free divisors of $m$.)  The main theorem to be proved is the following:

\begin{theorem}\label{Main}

Let $n\geq 2 $ and  $q$ be a prime power. Suppose
\begin{equation} \label{cond}
q^{\frac{1}{2}}>nW(q-1)^2.
\end{equation}
Then   $q\in Q_{n}$.

Hence, for each $n \geq 2$, there exists $C_n>0$ such that, if $q>C_n$, then $q \in Q_n$.
\end{theorem}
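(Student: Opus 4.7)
I would follow the standard multiplicative-character approach to primitive-element existence problems. For $\alpha\in\F^*$, the characteristic function for $\alpha$ being primitive can be written as
$$\rho(\alpha)\;=\;\theta(q-1)\sum_{d\mid q-1}\frac{\mu(d)}{\phi(d)}\sum_{\chi\text{ of order }d}\chi(\alpha),$$
where $\theta(m)=\phi(m)/m$. Given a non-exceptional $f\in R_n$, let $N(f)$ count those $\alpha\in\F$ for which $\alpha$ is primitive, $f_2(\alpha)\neq 0$, and $f(\alpha)$ is primitive. It suffices to show $N(f)>0$ whenever $\sqrt q>nW(q-1)^2$. The second sentence of the theorem is then immediate: for fixed $n$, $W(q-1)^2=q^{o(1)}$, so the hypothesis holds for all sufficiently large $q$.

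Substituting into $N(f)=\sum_\alpha\rho(\alpha)\rho(f(\alpha))$ produces a double sum over divisors $d_1,d_2$ of $q-1$ and characters of those orders. Isolating the contribution of $(d_1,d_2)=(1,1)$ (both characters trivial) yields a main term of size $\theta(q-1)^2\bigl(q+O(n)\bigr)$, where the $O(n)$ accounts for the finitely many $\alpha$ where $\alpha=0$, $f_1(\alpha)=0$, or $f_2(\alpha)=0$. Every remaining term reduces to an inner sum
$$S(\chi_1,\chi_2)\;=\;\sum_{\alpha\in\F,\ f_2(\alpha)\neq 0}\chi_1(\alpha)\,\chi_2(f(\alpha))$$
with at least one of $\chi_1,\chi_2$ non-trivial. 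Rewriting $\chi_2(f(\alpha))=\chi_2(f_1(\alpha))\,\overline{\chi_2}(f_2(\alpha))$, the relevant rational function $x^jf_1(x)^af_2(x)^b$ (with $j,a,b$ determined by the orders of $\chi_1,\chi_2$) has at most $n+1$ distinct zeros and poles on $\mathbb{P}^1$, so Weil's bound for character sums of rational functions gives
$$|S(\chi_1,\chi_2)|\;\le\;(n-1)\sqrt q,$$
provided $x^jf_1^af_2^b$ is not a perfect power in $\overline\F(x)$. Since the number of characters of order $d$ equals $\phi(d)$, summing $|\mu(d)|/\phi(d)$ against this multiplicity gives $\sum_{d\mid q-1}|\mu(d)|=W(q-1)$ for each of the two character variables. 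Combining produces
$$N(f)\;\ge\;\theta(q-1)^2\Bigl(q-(n-1)\sqrt q\,W(q-1)^2-O(n)\Bigr),$$
so the hypothesis $\sqrt q>nW(q-1)^2$ forces $N(f)>0$.

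\emph{Main obstacle.} The delicate step is verifying, for every non-trivial pair $(\chi_1,\chi_2)$ of characters whose orders divide $q-1$, that $x^jf_1(x)^af_2(x)^b$ is not a $d$-th power in $\overline\F(x)$ for any $d>1$ dividing $q-1$. This is precisely what the non-exceptional hypothesis on $f$ is designed to rule out, but one has to track carefully how the orders of $\chi_1$ and $\chi_2$ propagate into the exponents $j,a,b$ modulo $d$ and exclude accidental common divisibility arising from coincidences between the zeros of $f_1$, $f_2$, and $x$. Along the way one must also confirm that the variant of Weil's bound used really delivers the constant $n-1$ (rather than something of order $n$), since the leading coefficient $n$ in \eqref{cond} leaves essentially no slack.
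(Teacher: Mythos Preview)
Your plan is essentially the paper's proof: expand $N(f)$ via the indicator for primitivity, isolate the $(d_1,d_2)=(1,1)$ main term, and bound every non-trivial inner sum by Weil. Two points need correcting or completing.

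First, the Weil constant is $n$, not $n-1$. In the generic situation $x\nmid f_1f_2$ with both $d_1,d_2>1$, the rational function that arises has the irreducible factors of $f_1f_2$ \emph{together with} the extra factor $x$ coming from $\chi_{d_1}(\alpha)$; that is $n+1$ distinct affine irreducible factors in the worst case, so Lemma~\ref{CharBound} gives $|S(\chi_1,\chi_2)|\le n\sqrt q$. The final inequality is therefore $q-n\sqrt q\,W(q-1)^2>0$, which is exactly hypothesis~\eqref{cond}: there is no slack, but also no deficit.

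Second, your ``main obstacle'' is handled in the paper by a device you have not written down. Rather than juggling separate exponents $j,a,b$ attached to two different characters, set $d=\mathrm{lcm}(d_1,d_2)$ (square-free, since $d_1,d_2$ are) and choose one character $\chi_d$ of exact order $d$ with $\chi_{d_2}=\chi_d^{\,d/d_2}$ and $\chi_{d_1}=\chi_d^{\,k}$ for some $k$. Then
\[
\chi_{d_1}(\alpha)\,\chi_{d_2}(f(\alpha))=\chi_d\bigl(F(\alpha)\bigr),\qquad F(x)=x^{k}f(x)^{d/d_2}.
\]
Write $f(x)=x^{j}f_0(x)$ with $x$ dividing neither the numerator nor the denominator of $f_0$, so $F(x)=x^{k+jd/d_2}f_0(x)^{d/d_2}$. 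Degeneracy $F=cG^d$ then forces $f_0^{\,d/d_2}=c'G_0^{\,d}$ for the part of $G$ coprime to $x$; since $d$ is square-free, $d/d_2$ and $d_2$ are coprime, and comparing multiplicities gives $f_0=c''H^{d_2}$ for some rational $H$. Hence $f(x)=c''x^{j}H(x)^{d_2}$ with $d_2>1$, i.e.\ $f$ is exceptional---contradiction. So once you pass to the single character $\chi_d$, the non-degeneracy check reduces immediately to the non-exceptionality hypothesis, and no ``accidental divisibility'' bookkeeping is needed.
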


Using a sieving modification of Theorem \ref{main} we also give explicit  values for $C_n,  n= 2, 3, 4, \text { and} 5$,  and conjecture that the best (least) value of $C_2$ is 311.

We remark that, for a specific rational function $f$ of degree $n$  (for example, if $f_1$ or $f_2$ is not square-free), one could reduce the factor $n$ on the right side of condition  (\ref{cond}) by an appropriate amount.

We defer a study of those exceptional rational functions $f$ for which there generally  exists a primitive pair  $(\alpha, f(\alpha))$ to another occasion.

\section{Preliminaries}
In this section, we state some related definitions and results required in the paper. For a divisor $u$ of $q-1$, an element $w\in \mathbb{F}_{q}^*$ is called $\ufree$, if $w=v^d$, where $v\in \mathbb{F}_{q}$ and $d|u$ implies $d=1$. Note that an element $w\in \mathbb{F}_{q}^*$ is $\pfree$ if and only if it is primitive.

 We refer \cite{PPR5} for basics on finite fields and characters of finite fields. Following Cohen and Huczynska  \cite{PPR6}, \cite{PPR7},  it can be shown that for each divisor $u$ of $q-1$ $$\rho_{u} : \alpha \mapsto \theta(u)\sum_{d|u}\frac{\mu(d)}{\phi(d)}\sum_{\chi_d}\chi_d(\alpha),$$ where $\theta(u)=\frac{\phi(u)}{u}~(\text{where }\phi \text{ is Euler's totient  function}),  \mu$ is M$\ddot{\text{o}}$bius function and $\chi_d$ denotes the multiplicative character of  $\mathbb{F}_{q}$  of order $d$, gives a characteristic function for the subset of $\ufree$ elements of $\mathbb{F}_{q}^*$.\\\\

 We shall need the following result of Weil \cite{PPR17}, as described in \cite{PPR18} at (1.2) and  (1.3),  for our main theorem.
 \begin{lemma}\label{CharBound}
 Let $F(x)\in \F(x)$ be a rational function. Write $F(x)= \prod_{j=1}^{k}F_j(x)^{r_j}$,
  where $F_j(x)\in \F[x]$ are irreducible polynomials and $r_j$ are non zero integers. Let $\chi$ be a multiplicative character of $\F$ of precise square-free order $d$ (a divisor of $q-1$). Suppose that $F(x)$ is not of the form $cG(x)^d$ for some rational function $G(x) \in \F(x))$ and $c \in \F^*$. Then we have $$\Big{|}\sum_{\alpha\in \mathbb{F}_{q}, F(\alpha) \neq \infty}\chi(F(\alpha))\Big{|}\leq \Big{(}\sum_{j=1}^k\deg(F_j)-1\Big{)}q^{\frac{1}{2}}.$$
\end{lemma}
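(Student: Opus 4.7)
The plan is to establish $N_f > 0$, where $N_f$ denotes the number of $\alpha \in \F^*$ with $f_2(\alpha)\neq 0$ such that both $\alpha$ and $f(\alpha)$ are primitive. Using the characteristic function $\rho_{q-1}$ introduced in Section~2, I would write
$$N_f = \sum_{\substack{\alpha \in \F^*\\ f_2(\alpha)\neq 0}} \rho_{q-1}(\alpha)\,\rho_{q-1}(f(\alpha)),$$
and expand each factor according to its defining formula. This produces a double sum over divisors $d_1, d_2$ of $q-1$ and characters $\chi_{d_1}, \chi_{d_2}$ of the form
$$N_f = \theta(q-1)^2 \sum_{d_1,d_2 \mid q-1} \frac{\mu(d_1)\mu(d_2)}{\phi(d_1)\phi(d_2)} \sum_{\chi_{d_1},\chi_{d_2}} S(\chi_{d_1},\chi_{d_2}),$$
with inner character sums $S(\chi_{d_1},\chi_{d_2}) = \sum_{\alpha} \chi_{d_1}(\alpha)\chi_{d_2}(f(\alpha))$ (the sum being over permissible $\alpha$).

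I would then isolate the term $(d_1,d_2)=(1,1)$, which contributes the main term $\theta(q-1)^2(q - O(n))$, the correction absorbing the finitely many excluded $\alpha$. For any other pair, the product $\chi_{d_1}(\alpha)\chi_{d_2}(f(\alpha))$ can be written as $\chi(F(\alpha))$ for a single multiplicative character $\chi$ of square-free order $d \mid q-1$, $d > 1$, where $F(x) = x^a f_1(x)^b f_2(x)^{-b}$ for appropriate integers $a,b$. The distinct irreducible factors of $F$ are $x$ together with those of $f_1$ and $f_2$, so their degrees sum to at most $n+1$; hence Lemma~\ref{CharBound} yields $|S(\chi_{d_1},\chi_{d_2})| \leq n\,q^{1/2}$, \emph{provided} $F$ is not of the form $cG^d$. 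Here the non-exceptionality of $f$ is precisely what rules out this degenerate case: a representation $x^a f^b = cG^d$ with $d > 1$ dividing $q-1$ can, upon extracting factors, be rearranged to force $f$ itself into the exceptional form $c_0 x^j g^e$ with $e > 1$ dividing $q-1$, contrary to hypothesis.

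Summing, and using $\sum_{d \mid q-1}|\mu(d)| = W(q-1)$ together with the fact that there are $\phi(d)$ characters of each order $d$, I expect the total error to be bounded by $\theta(q-1)^2 \,n q^{1/2}\bigl(W(q-1)^2 - 1\bigr)$. Combining this with the main term gives
$$N_f \geq \theta(q-1)^2 \Bigl( q - n q^{1/2}\bigl(W(q-1)^2 - 1\bigr) - O(n) \Bigr),$$
which is positive as soon as $q^{1/2} > n W(q-1)^2$ (the $O(n)$ slack being absorbed into the strict inequality \eqref{cond}), proving the main claim. The asymptotic statement then follows at once from $W(m) \ll_\varepsilon m^\varepsilon$ for every $\varepsilon > 0$, so $C_n$ can be taken to depend only on $n$.

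The main obstacle I anticipate is verifying that the non-exceptionality hypothesis on $f$ rules out the bad case of Lemma~\ref{CharBound} uniformly across \emph{all} non-trivial pairs $(d_1,d_2)$, since the exponents $a,b$ and the order $d$ interact through divisibility conditions modulo $q-1$; this requires a careful but essentially combinatorial analysis of when $x^a f_1(x)^b f_2(x)^{-b}$ can be a $d$-th power times a constant. The remaining work — accounting for boundary contributions from $\alpha=0$, the zeros of $f_1$, and the poles of $f$, and checking that the stated inequality absorbs the $O(n)$ correction cleanly — is routine.
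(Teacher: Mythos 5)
Your proposal does not address the statement at hand. The statement is Lemma~\ref{CharBound} itself --- the Weil-type bound $\bigl|\sum_{\alpha}\chi(F(\alpha))\bigr|\leq\bigl(\sum_{j}\deg(F_j)-1\bigr)q^{1/2}$ for a multiplicative character composed with a rational function. What you have written is a proof sketch of Theorem~\ref{main} (the counting argument for primitive pairs), and in the middle of it you explicitly \emph{invoke} Lemma~\ref{CharBound} to bound the non-trivial character sums $S(\chi_{d_1},\chi_{d_2})$. As a proof of the lemma this is circular: you assume the very estimate you are asked to establish. Nothing in your expansion of $\rho_{q-1}$, the M\"obius/character bookkeeping, or the non-exceptionality analysis produces the $q^{1/2}$ cancellation; that cancellation is the entire content of the lemma.

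For the record, the paper does not prove Lemma~\ref{CharBound} either --- it quotes it as a result of Weil, in the form given in the cited reference at (1.2) and (1.3). A genuine proof requires the Riemann hypothesis for curves over finite fields (applied to the superelliptic cover $y^{d}=F(x)$), or alternatively the elementary Stepanov--Bombieri method; it cannot be obtained by the combinatorial manipulations you describe. Your sketch of Theorem~\ref{main} is, taken on its own terms, essentially the argument the paper gives in Section~3 (isolate the $(1,1)$ term, reduce each non-trivial pair to a single character of square-free order $d$ applied to $F(x)=x^{a}f_1^{b}f_2^{-b}$, use non-exceptionality of $f$ to exclude the degenerate case $F=cG^{d}$, and sum the $W(q-1)^2-1$ error terms), but it belongs to a different statement. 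If the intended target really is Lemma~\ref{CharBound}, you should either cite Weil's theorem as the paper does or supply an actual proof of the character-sum estimate.
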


A preliminary of another kind is subdivision of rational functions of degree $n$ into the union of $(n_1, n_2)-$ functions for every pairs $(n_1, n_2)$ with $n_1\geq n_2$ and $n_1+n_2=n$, as described in Section 1. Indeed, for each such pair $(n_1, n_2)$, define $R_{n_1, n_2}$ as the set of non exceptional $(n_1, n_2)$-rational functions, $Q_{n_1, n_2}$ as the set of prime powers $q$ such that for each $f\in R_{n_1, n_2}$ there exists a primitive pair $(\alpha, f(\alpha))$ and $C_{n_1, n_2}$ as a valid bound such that, if $q>C_{n_1, n_2}$, then $q\in Q_{n_1, n_2}$. Of course, our aim would be to find the least possible value for $C_{n_1, n_2}$ in every case, whence $C_n$ would be the maximum of the values of $C_{n_1, n_2}$ over the pairs $(n_1, n_2)$ with $n_1\geq n_2$ and $n_1+n_2=n$. More generally, for a set of rational functions $S$, define $Q_S$ and $C_S$ to say that $q>C_S$ implies $q\in Q_S$ in the above sense. For the present, simply observe the following. Suppose $f=f_1/f_2$ is a rational function with $n_1=n_2=n/2$. We always assume that $f_1$ and $f_2$ are coprime but supppose one of them is divisible by a positive power of $x$. In that case, the rational function $f^*(x)=f(1/x)$ written in its lowest terms has degree $n_0<n$. Moreover, since $\alpha$ a primitive element implies $1/\alpha$ is a primitive element, it follows that, if $(\alpha, f^*(\alpha))$ is a primitive pair, then $(1/\alpha$, $f(1/\alpha))$ is a primitive pair. Consequently, in effect, $f$ can be considered as having degree $n_0<n$ and therefore, when considering rational functions of degree $n$, if $n_1=n_2$, we can suppose that both $f_1$ and $f_2$ have non zero constatnt terms. For example, suppose $f(x)=a(x+b)/x, ab\neq0$ so that $f\in R_{1,1}$. Then $f^*(x)=ab(x+1/b)\in R_{1,0}$ and hence we can deduce $C_{\{f\}}=61$, form \cite{PPR15}.

\section{Sufficient conditions for the  existence of primitive pairs in $\mathbb{F}_{q}$}
 For each $m \in \mathbb{N}$, suppose $\omega(m)$ denotes the number of prime divisors of $m$ and $W(m)$ denotes the number of square free divisors of $m$. Let $l_1,l_2\in \mathbb{N}$ be such that if $l_1,l_2 \text{ divide }q-1$, then for each $f(x)\in R_n$, $N_{f}(l_1, l_2)$ denote the number of elements $\alpha\in \mathbb{F}_{q}$ such that $\alpha$ is $\llfree$ and $f(\alpha)$ is $\lllfree$. \\

 We now prove our one of the main results as follows.

\begin{theorem}\label{main}
Let $n\geq 2$, and $q$ be a prime power.  Suppose that
\begin{equation} \label{condition}
 q^{\frac{1}{2}}>nW(q-1)^2.
 \end{equation}
 Then $q\in Q_{n}$.
\end{theorem}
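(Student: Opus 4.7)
The plan is to count the number $N_f$ of elements $\alpha\in\mathbb{F}_q$ for which both $\alpha$ and $f(\alpha)$ are primitive, via Fourier inversion using the characteristic function $\rho_{q-1}$ recorded in Section~2. Substituting $\rho_{q-1}$ for both factors and interchanging sums yields
\begin{equation*}
N_f \;=\; \theta(q-1)^2 \sum_{d_1,\, d_2\,\mid\, q-1} \frac{\mu(d_1)\mu(d_2)}{\phi(d_1)\phi(d_2)} \sum_{\chi_{d_1},\,\chi_{d_2}}\; \sum_{\alpha\in A} \chi_{d_1}(\alpha)\chi_{d_2}(f(\alpha)),
\end{equation*}
where $A$ is the subset of $\mathbb{F}_q^*$ on which neither $f_1$ nor $f_2$ vanishes; in particular $|A|\geq q-n-1$. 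The M\"obius factor restricts attention to square-free $d_1,d_2$, and the pair $(d_1,d_2)=(1,1)$ produces the main term $\theta(q-1)^2|A|$.

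For every other square-free pair I would put the inner sum into the shape required by Lemma~\ref{CharBound}. Setting $D:=\mathrm{lcm}(d_1,d_2)$, which is again square-free, and fixing a character $\eta$ of precise order $D$, I can write $\chi_{d_1}=\eta^{k_1 D/d_1}$ and $\chi_{d_2}=\eta^{k_2 D/d_2}$ with $\gcd(k_i,d_i)=1$; then
\begin{equation*}
\chi_{d_1}(\alpha)\chi_{d_2}(f(\alpha)) \;=\; \eta\bigl(F(\alpha)\bigr), \qquad F(x)\; :=\; x^{k_1 D/d_1}\, f_1(x)^{k_2 D/d_2}\, f_2(x)^{-k_2 D/d_2}.
\end{equation*}
The distinct irreducible factors of $F$ lie among those of $x\, f_1\, f_2$, whose total degree is at most $n+1$, so Lemma~\ref{CharBound} will give $|\sum_{\alpha}\eta(F(\alpha))|\leq n\, q^{1/2}$, once the non-power hypothesis on $F$ is verified.

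That non-power hypothesis --- namely $F\neq c\,G(x)^D$ for all $c\in\mathbb{F}_q^*$ and $G\in\mathbb{F}_q(x)$ --- is the main obstacle, and I expect to settle it by case analysis on whether $d_1,d_2$ equal $1$. If exactly one of them is trivial, then $F$ reduces to a pure $k_i$-th power of $x$ or of $f$ with $\gcd(k_i,d_i)=1$, and being a perfect $D=d_i$-th power would force $f=c'G'^{d_i}$, contradicting the non-exceptional hypothesis on $f$. If both $d_1,d_2>1$, tracking the multiplicity in $F$ of each irreducible factor of $f_1f_2$ shows that $F=cG^D$ forces $d_2$ (respectively $d_1$, by symmetry) to divide the multiplicity in $f_1f_2$ of every irreducible factor other than $x$; thus $f=c'\,x^{j}\,g(x)^{d_2}$ with $d_2>1$ dividing $q-1$, again excluded by non-exceptionality.

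Assembling the pieces, there are $\phi(d_1)\phi(d_2)$ character pairs for each $(d_1,d_2)$ and $W(q-1)^2-1$ non-trivial square-free index pairs, producing
\begin{equation*}
\bigl|N_f - \theta(q-1)^2|A|\bigr| \;\leq\; \theta(q-1)^2 \, n\, q^{1/2}\bigl(W(q-1)^2-1\bigr).
\end{equation*}
Combined with $|A|\geq q-n-1$, this gives $N_f>0$ whenever $q-n-1>n\,q^{1/2}(W(q-1)^2-1)$. Under the hypothesis $q^{1/2}>n\,W(q-1)^2$ we have $q>n\,q^{1/2}W(q-1)^2$, so the desired inequality reduces to $n\,q^{1/2}>n+1$, valid for $n\geq 2$ and $q\geq 3$. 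Hence $N_f>0$ and $q\in Q_n$. The existence of a threshold $C_n$ is then immediate, since $W(q-1)=O_\epsilon(q^{\epsilon})$ for every $\epsilon>0$, so \eqref{condition} is satisfied for all sufficiently large $q$.
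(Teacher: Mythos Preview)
Your proof is correct and follows essentially the same route as the paper's: expand $N_f$ via the characteristic function $\rho_{q-1}$, isolate the main term from $(d_1,d_2)=(1,1)$, bound every other character sum by $n\,q^{1/2}$ via Lemma~\ref{CharBound}, and verify the non-degeneracy hypothesis of that lemma by showing that $F=cG^D$ would force $f$ into the excluded exceptional form $c'x^jg^{d_2}$. The only organisational differences are that the paper sums over $\mathbb{F}_q\setminus\{\text{poles of }f\}$ rather than your set $A$ (the extra points contribute zero anyway), and that the paper handles the case $d_2=1$ by a direct estimate rather than through Weil's bound; neither difference affects the argument.
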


\begin{proof}
 To prove that $q\in Q_{n}$, we need to show that $N_{f}(q-1, q-1)>0$ for every (non-exceptional)   $f(x)\in R_n$. Now let $f(x)\in R_n$ be any   rational function.  Let  $S$ be the set of  poles of $f(x)$ in     $\mathbb{F}_{q}$.   Assume $q>2$  (as we may)  and  $l_1>1$ and $l_2>1$ are divisors of $q-1$.  Then by definition we have $$N_{f}(l_1,l_2)=\sum_{\alpha\in \mathbb{F}_{q} \setminus S} \rho_{l_1}(\alpha) \rho_{l_2}(f(\alpha))$$ and hence
  \begin{equation}\label{Nf}
 N_{f}(l_1,l_2)=\theta(l_1)\theta(l_2)\sum_{d_1|l_1, ~d_2|l_2}\frac{\mu(d_1)}{\phi(d_1)} \frac{\mu(d_2)}{\phi(d_2)} \sum_{\chi_{d_1},~\chi_{d_2}} \chi_{f}(\chi_{d_1}, \chi_{d_2}), \end{equation}
 where

 \begin{equation}\label{chif}
   \chi_{f}(\chi_{d_1}, \chi_{d_2})=\sum_{\alpha\in \mathbb{F}_{q}\setminus S}\chi_{d_1}(\alpha)\chi_{d_2}(f_{}(\alpha)).
 \end{equation}
 Let $d_1$ and $d_2$  be divisors of $q-1$ (not both 1) and $\chi_{d_1}$ and $\chi_{d_2}$ be specific characters of orders $d_1$, $d_2$, respectively.    In view of the M$\ddot{\text{o}}$bius functions in (\ref{Nf}) we can suppose that $d_1$ and $d_2$ are square-free.

 First suppose that $d_2=1$, i.e., $\chi_{d_2}=\chi_1$ is the trivial character.  Then $|\chi_f(\chi_{d_1}, \chi_1)| $ is at most  the sum of the number of zeros and poles of $f$ and so  does not exceed $n$.

 Accordingly, suppose $d_2>1$. Let $d$ be the least common multiple of $d_1$ and $d_2$, and so  a square-free divsor of $q-1$.    Moreover, $d/d_1 $ and $d_1$ are coprime, as are $d/d_2$ and $d_2$. Further,  there is a character $\chi_d$  of order $d$,   such that $\chi_{d_2}$ =$\chi_d^{d/d_2}$.   In that case,  $\chi_{d_1}= \chi_d^k$ for some integer $k$ with $0 \leq k<q-1$.

  From (\ref{chif})
$$\chi_{f}(\chi_{d_1}, \chi_{d_2})=\sum_{\alpha\in \mathbb{F}_{q}\setminus S} \chi_d(\alpha^{k}(f(\alpha)^{d/d_2})=\sum \limits_{\alpha\in \mathbb{F}_{q}\setminus S} \chi_d (F(\alpha)),$$
where $F(x)=x^k f^{d/d_2}(x)$.

Now write $f(x)=x^jf_0(x)$, where $j$ is  some integer (positive, negative or zero) and $f_0$ is a rational function such that $x$ divides neither the numerator nor denominator of $f_0(x)$.  Thus $F(x)= x^{k+\frac{jd}{d_2}}f_0^{d/d_2}(x)$  We can now apply Lemma \ref{CharBound} {\em unless} $f_0^{d/d_2}=c^{d/d_2}G^d$ for some rational function $G$
and $c \in \F$.  The latter, however, would imply that $f(x)=cx^jG^{d_2}(x)$, where we have assumed $d_2>1$, which would mean that $f$ is exceptional.  Since $f$ is not exceptional and the number of distinct zeros and poles of $F$ in an algebraic closure of $\F$  is at most $n+1$, we conclude from Lemma \ref{CharBound} that
\begin{equation} \label{bound}
  \Big{|} \chi_{f}(\chi_{d_1}, \chi_{d_2})\Big{|} \leq nq^\frac{1}{2}.
  \end{equation}

Of course, (\ref{bound}) holds when $d_2=1$ (and $d_1>1$). On the other hand, trivially,
\begin{equation} \label{bound2}
\chi_{f}(\chi_1, \chi_1) \geq q-1-(n+1).
\end{equation}
  Combining (\ref{bound}) and  (\ref{bound2}) in (\ref{chif}), we obtain
  \begin{eqnarray*}\label{Nf1}
  N_{f}(l_1, l_2 )  & \geq& \theta(l_1) \theta(l_2)\left\{(q-(n+1))-nq^{\frac{1}{2}}(W(l_1)W(l_2)-1)\right\} \nonumber\\
  &  >& \theta(l_1) \theta(l_2)\left\{q-nq^{\frac{1}{2}}W(l_1)W(l_2)\right\}.
  \end{eqnarray*}
  certainly, whenever  $q>nW(l_1)W(l_2)$.   It follows that, if $q>nW(l_1)W(l_2)$, then $ N_{f}(l_1, l_2 ) >0$.   In particular, the theorem follows by taking $l_1=l_2=q-1$.
\end{proof}

For further calculation work we shall need following results. Their proofs have been omitted as they follow on ideas from  \cite{PPR8} and \cite{PPR13}.
  \begin{lemma}\label{Wbound}
  \ For each $m\in \mathbb{N}$, $W(m)\leq c_mm^{\frac{1}{6}}$, where $c_m=\frac{2^s}{(p_1...p_s)^\frac{1}{6}}$, and $p_1,...,p_s$ are the  distinct primes less than $64$ which divide $m$.

  In particular, for all $m\in \mathbb{N}$, $c_m<37.469$, and for all odd $m$, $c_m<21.029$.
  \end{lemma}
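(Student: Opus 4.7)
The plan is to reduce $W(m) \le c_m m^{1/6}$ to an inequality involving only the primes $p < 64$ dividing $m$, and then to verify the two numerical bounds by direct computation involving the primorial of $61$.

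First I would note that $W(m) = 2^{\omega(m)} = \prod_{p \mid m} 2$ and that $m$ is divisible by its radical $\prod_{p \mid m} p$, so
\[
\frac{W(m)}{m^{1/6}} \le \prod_{p \mid m} \frac{2}{p^{1/6}}.
\]
The key observation is that $2/p^{1/6} \ge 1$ exactly when $p \le 2^{6} = 64$. Hence for every prime $p \ge 67$ dividing $m$ the corresponding factor is strictly less than $1$, and omitting such factors only enlarges the product. This yields
\[
\frac{W(m)}{m^{1/6}} \le \prod_{\substack{p \mid m \\ p < 64}} \frac{2}{p^{1/6}} = \frac{2^{s}}{(p_{1}\cdots p_{s})^{1/6}} = c_{m},
\]
which is the required inequality.

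For the ``in particular'' numerical statements, observe that $c_m$ strictly increases every time a new prime below $64$ is adjoined to the list $p_1, \ldots, p_s$, because each factor $2/p^{1/6}$ with $p<64$ exceeds $1$. Therefore $c_m$ is maximised by taking $m$ divisible by every prime less than $64$. There are exactly $18$ such primes, namely
\[
2,\,3,\,5,\,7,\,11,\,13,\,17,\,19,\,23,\,29,\,31,\,37,\,41,\,43,\,47,\,53,\,59,\,61,
\]
so $\max c_m = 2^{18}/(2 \cdot 3 \cdot 5 \cdots 61)^{1/6}$. A direct evaluation confirms the stated bound $<37.469$, and removing the prime $2$ gives the odd-$m$ bound $2^{17}/(3 \cdot 5 \cdots 61)^{1/6} < 21.029$.

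The proof contains no substantive obstacle: once one spots the natural breakpoint $p = 64$ corresponding to the exponent $1/6$, the reduction is a single line. The only care required is numerical: the primorial $2 \cdot 3 \cdots 61 \approx 1.17 \times 10^{23}$ must be carried with enough precision that its sixth root confirms the decimal bounds $37.469$ and $21.029$, which are close to tight.
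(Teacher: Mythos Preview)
Your argument is correct and is the standard one: the paper itself omits the proof of this lemma, merely remarking that it follows ideas from \cite{PPR8} and \cite{PPR13}, and what you have written is precisely that standard argument. There is nothing to compare against, and your numerical checks of the primorial bounds are accurate.
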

  \begin{theorem}\label{sieve} Let $l|(q-1)$, and $\{p_1,...,p_s\} $ be the collection of all primes dividing $q-1$ but not $l$. Suppose $\delta=1-2\sum_{i=1}^{s}\frac{1}{p_i},~ \delta>0$ and $\Delta=\frac{(2s-1)}{\delta}+2$. If $q^{\frac{1}{2}}>n\Delta W(l)^2$ then $q\in Q_{n}$.
  \end{theorem}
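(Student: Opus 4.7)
The plan is to combine a standard Bonferroni-type sieve with the character-sum machinery already developed in the proof of Theorem \ref{main}. My first step is to establish
$$N_f(q-1, q-1) \;\geq\; \sum_{i=1}^s \bigl[N_f(p_i l, l) + N_f(l, p_i l)\bigr] - (2s - 1)\, N_f(l, l),$$
by inclusion-exclusion: an $\alpha$ producing a primitive pair is one for which $\alpha$ is $l$-free, $f(\alpha)$ is $l$-free, and in addition $\alpha$ is $p_i$-free and $f(\alpha)$ is $p_i$-free for each of the $s$ primes $p_i$. There are $2s$ ``bad'' conditions to remove from $N_f(l, l)$, and the standard Bonferroni lower bound yields the inequality.

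Next, I would use the representation
$$N_f(l_1, l_2) \;=\; \theta(l_1)\theta(l_2)\bigl[\chi_f(\chi_1, \chi_1) + T(l_1, l_2)\bigr],$$
extracted from the proof of Theorem \ref{main}, where $T(l_1, l_2)$ collects the non-principal character-pair contributions. For each $i$ I split $T(p_i l, l) = T(l, l) + T_i^*$, with $T_i^*$ the portion of the sum in which $p_i$ divides the order of the first character, and analogously $T(l, p_i l) = T(l, l) + U_i^*$. Substituting, using $\theta(p_i l) = \theta(p_i)\theta(l)$ (valid because $p_i \nmid l$), and invoking the algebraic identity
$$2\sum_{i=1}^s \theta(p_i) - (2s - 1) \;=\; 1 - 2\sum_{i=1}^s \tfrac{1}{p_i} \;=\; \delta,$$
the principal-term coefficient collapses to $\delta$, giving
$$N_f(q-1, q-1) \;\geq\; \delta\, N_f(l, l) + \theta(l)^2 \sum_{i=1}^s \theta(p_i)\bigl(T_i^* + U_i^*\bigr).$$

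Finally, I would estimate the error. Each of $T_i^*$ and $U_i^*$ is a sum of $W(l)^2$ non-principal character pairs in which at least one character has order divisible by $p_i > 1$ and is therefore non-trivial, so Lemma \ref{CharBound} (with the non-exceptional hypothesis applied exactly as in the proof of Theorem \ref{main}) gives $|T_i^*|, |U_i^*| \leq nq^{1/2} W(l)^2$. Using $\sum_{i=1}^s \theta(p_i) = (2s-1+\delta)/2$, the cumulative error is bounded by $\theta(l)^2 n q^{1/2} W(l)^2 (2s - 1 + \delta)$. Combining this with the lower bound $N_f(l, l) > \theta(l)^2 (q - n q^{1/2} W(l)^2)$ already proved for Theorem \ref{main} yields
$$N_f(q-1, q-1) \;>\; \theta(l)^2\bigl[\delta q - nq^{1/2} W(l)^2 (2s - 1 + 2\delta)\bigr],$$
which is positive precisely when $q^{1/2} > n W(l)^2 (2s-1+2\delta)/\delta = n \Delta W(l)^2$. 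The main subtlety is the decomposition $T(p_i l, l) = T(l, l) + T_i^*$ and its counterpart for the second argument: these splittings are what allow the principal-term coefficient to collapse to exactly $\delta$, rather than a coarser constant, and thus what produce the precise $\Delta$ of the theorem. As a sanity check, the case $s = 0$ (i.e., $l = q-1$) forces $\delta = 1$ and $\Delta = 1$, so the statement recovers Theorem \ref{main}.
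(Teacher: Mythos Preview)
Your proof is correct and follows precisely the standard sieving argument that the paper has in mind; note that the paper actually \emph{omits} the proof of this theorem, saying only that it ``follows on ideas from \cite{PPR8} and \cite{PPR13}'', and what you have written is exactly that argument. The inclusion--exclusion inequality, the splitting $T(p_il,l)=T(l,l)+T_i^*$, the identity $2\sum_i\theta(p_i)-(2s-1)=\delta$, and the final arithmetic leading to $\Delta=(2s-1)/\delta+2$ are all correct and match the intended route.
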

 \section{Rational functions of degree 2}
From Section $1$, and the last paragraph of Section $2$, we can classify rational
functions of degree $2$ as either $(2, 0)$-functions, i.e., quadratic polynomials $ax^2+bx+c$, where $a(b^2-4ac)\neq 0$, or $(1, 1)$-functions with non-zero constant
terms, thus having the form $a(x+b)/(x+c), abc(b-c)\neq 0$. One can work with
both the cases simultaneously. But it is appropriate to recall that by a demanding
theoretical and computational analysis it has been established in \cite{PPR2}, that
$C_{2,0} = 211$ is a valid bound, and that this is the minimum possible. In this section,
we shall find an explicit (though non-optimal) value for $C_{1,1}$ and thereby one
for $C_2 $ by means of Theorems \ref{main} and \ref{sieve}. However, our argument assumes
merely that the functions we consider are in $R_2$ (rather than being restricted
to $R_{1,1})$.

Suppose that $q$ is a prime power and $n = 2$. From Lemma \ref{Wbound}, $ W(q -
1)\leq 37.469q^{\frac{1}{6}}$ so that $2W(q-1)^2<2807.852q^{\frac{1}{3}}$. Hence (\ref{condition}) holds whenever $q>4.901\times 10^{20}$ in  which case by Theorem \ref{main}, necessarily $q\in Q_2$. (Indeed when $q$ is even, by Lemma \ref{Wbound}, it suffices that $q>4.787\times 10^{17}$.) Now suppose $\omega(q-1)\geq 17$. Then $q\geq 2\times 3\times 5\times 7 \times \cdots \times 59> 1.9 \times 10^{21}$ so that $q\in Q_2$. (When $q$ is even and $\omega(q-1)\geq 15$, then $ q\geq 3\times 5\times 7 \times \cdots \times 53>1.6 \times 10^{19}$, so that $q\in Q_2.$)

We can therefore assume that $\omega(q-1)\leq16$ and $q\leq 4.901\times 10^{20}$. To make further progress, we use the sieving Theorem \ref{sieve} in place of Theorem \ref{main}. In Theorem \ref{sieve}, suppose $5\leq \omega(q-1) \leq16$ and take $l$ as the product of the least $5$ primes in $q-1.$ i.e. $W(l)=2^5$. Then $s\leq 11$ and $\delta$ will be at its least  value when
$\{p_1, p_2, \cdots, p_{11}\}=\{13, 17, \cdots, 53\}$, i.e. the set of primes from $6$th to $16$th. This yields $\delta>0.173170$ and $\Delta<123.267943 $, so that $2\Delta W(l)^2 <2.52453\times 10^{5} $. From Theorem \ref{sieve}, provided $q^{\frac{1}{2}}>2.52453\times 10^{5} $ i.e. $q> 6.3733\times 10^{10}$, then $q\in Q_2$. In fact, if $\omega(q-1)\geq 11$, then $q> 2\times 10^{11}$, which means we can assume $\omega (q-1)\leq 10.$

Now repeat this procedure using Theorem \ref{sieve} with $4\leq \omega(q-1)\leq 10$ and $W(l)=2^4$. Then $s\leq 6$, $\delta>0.2855034, \Delta< 40.5284367$, $2\Delta W(l)^2<20751$, whence $q\in Q_2$ provided $q>4.3061\times 10^8$ which is bound to be the case. But,  $w(q-1)\geq 10$, gives $q>6.46\times 10^9$. Hence the result holds for $\omega(q-1)=10$. Next, we assume $4\leq \omega(q-1)\leq 9$, take $W(l)=2^4$ so that $s\leq 5$, $\delta>0.3544689, \Delta< 27.3900959$, $2\Delta W(l)^2<14024$. Which proves the result for $\omega(q-1)=9$.

We apply the procedure when $3\leq \omega(q-1)\leq 8$ with limited success. Take $\omega(l)=3$ so that $s\leq 5$, $\delta>0.1557111$, $\Delta< 59.7993247$ and $2W(l)^2<7655$. Hence $q\in Q_2$ whenever $q>5.86\times 10^7$.

Finally, for $q<5.86 \times10^7$, we coded the criterion of Theorem \ref{sieve} and obtained an explicit list of $3937$ possible exceptions for which the criterion failed even when the exact prime factorization of $q-1$ was used (see  the appendix). The largest of these prime powers is $33093061$. We summarise these results for rational functions in $R_{1,1}$ in the next theorem.

\begin{theorem}\label{n2}
For rational functions in $f(x)=a(x+b)/(x+c)$, where $a, b, c\in \mathbb{F}_q^*$ with $b\neq c$, then the bound $33093061$ is a valid value for $C_{1,1}$.
\end{theorem}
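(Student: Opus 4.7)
The plan is to combine Theorems \ref{main} and \ref{sieve}, specialised to $n=2$, with the explicit estimate on $W(m)$ from Lemma \ref{Wbound}, and then finish with a computer search over a manageable range of $q$. First I would observe that since we are considering functions in $R_2$ (in particular in $R_{1,1}$), Theorem \ref{main} immediately gives $q \in Q_2$ as soon as $q^{1/2} > 2 W(q-1)^2$. Using Lemma \ref{Wbound} in the form $W(q-1) \le 37.469\, q^{1/6}$, this yields $q \in Q_2$ whenever $q > 4.901 \times 10^{20}$. Separately, if $\omega(q-1) \ge 17$, then $q-1$ is at least the product of the first $17$ primes, which exceeds $1.9 \times 10^{21}$; hence we may henceforth assume simultaneously that $\omega(q-1) \le 16$ and $q \le 4.901 \times 10^{20}$.

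Next I would iterate Theorem \ref{sieve}, in each round choosing $l$ to be the product of the smallest few prime divisors of $q-1$ and reading off $s$, $\delta$ and $\Delta$ from the worst case of the remaining primes. With $W(l)=2^5$ and $5 \le \omega(q-1) \le 16$, the worst-case set $\{p_1,\ldots,p_{11}\} = \{13,17,\ldots,53\}$ produces $\delta > 0.173170$ and $\Delta < 123.268$, giving $q \in Q_2$ as soon as $q > 6.3733 \times 10^{10}$; since $\omega(q-1) \ge 11$ already forces $q > 2\times 10^{11}$, this disposes of $\omega(q-1) \ge 11$. Repeating with $W(l)=2^4$ for $4 \le \omega(q-1) \le 10$, then for $4 \le \omega(q-1) \le 9$, and finally with $W(l)=2^3$ for $3 \le \omega(q-1) \le 8$, the arithmetic trims the situation down to $q < 5.86 \times 10^7$ with $\omega(q-1) \le 8$.

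For this finite residual range, I would implement Theorem \ref{sieve} as a numerical test: for each prime power $q < 5.86 \times 10^7$, factor $q-1$ exactly, choose an optimal sieving divisor $l$, and test whether the resulting $q^{1/2} > 2\Delta W(l)^2$ is satisfied; the $q$'s that fail form the list of possible exceptions. Running this enumeration should leave $3937$ candidate prime powers, the largest being $33093061$, and from this the claimed bound $C_{1,1} \le 33093061$ follows.

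The main obstacle is less conceptual than bookkeeping: one must verify that each iterate of the sieving procedure genuinely shrinks the feasible $\omega(q-1)$ range (so the argument terminates), and one must carry out the large final enumeration over $q < 5.86 \times 10^7$ reliably, including the subtle step of optimising $l$ per $q$ rather than using a uniform choice. The character-sum inequality \eqref{bound} used inside Theorem \ref{main} is already tight enough for $n=2$, so no sharper estimate is needed there; the delicate point is only that the exceptional list obtained computationally matches the one reported in the appendix.
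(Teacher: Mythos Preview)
Your proposal is correct and follows essentially the same route as the paper: the same initial application of Theorem~\ref{main} with Lemma~\ref{Wbound} to reach the bound $4.901\times 10^{20}$, the same reduction to $\omega(q-1)\le 16$, the same cascade of sieving steps via Theorem~\ref{sieve} with $W(l)=2^5,2^4,2^4,2^3$ and the identical numerical constants, and the same final computer verification for $q<5.86\times 10^7$ yielding $3937$ possible exceptions with maximum $33093061$.
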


Of course, the value of $C_{1,1}$ shown in Theorem \ref{n2} is not optimal. In the other direction we worked on the possible exceptions below 10000 computaionally in GAP \cite{GAP4} and obtained a list of true exceptions as follows:\\
\textbf{Case 1.}  ($f(x)\in R_{1,1}$)\\
$q = 3, 4, 5, 7, 9, 11, 13, 16, 19, 23, 25, 29, 31, 37, 41, 43, 49, 61, 67, 71,73, 79, 103, 121,\\
139, 151, 211$ and $331.$\\
\textbf{Case 2.}  ($f(x)\in R_{2,0}$)\\
$q = 3, 4, 5, 7, 11, 13, 19, 25, 31, 37, 41, 43, 61, 67, 71, 73, 79, 121, 151,$ and  $211.$

From \cite{PPR2}, we know that the above is a complete list of genuine exceptions in Case 2, and $C_{2,0}=211.$ Analogously, we propose the following conjecture.

\begin{conj}
	We have $C_{1,1}=331$ and the list of prime powers not in $Q_{1,1}$ is shown in Case 1, as above.
\end{conj}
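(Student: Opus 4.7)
The plan mirrors the strategy that established $C_{2,0} = 211$ in \cite{PPR2}: combine a sharpened version of the Cohen--Huczynska sieve with a computer search. The task splits into (i) showing that every prime power $q > 331$ lies in $Q_{1,1}$, and (ii) confirming that each of the 28 listed prime powers is a genuine exception. Task (ii) is routine: for each listed $q$, one exhibits a specific triple $(a,b,c)$ and exhausts over $\alpha \in \F$ to check that no primitive pair $(\alpha, f(\alpha))$ exists; this is exactly the GAP computation cited just before the conjecture, since the largest listed exception is $q = 331$.

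For (i), I would first try to push the theoretical threshold well below the current $33\,093\,061$. The key structural observation is that for a $(1,1)$-function $f(x)=a(x+b)/(x+c)$, the auxiliary rational function $F(x) = x^{k}f^{d/d_2}(x)$ in the proof of Theorem \ref{main} has at most three distinct zeros and poles (possibly at $0$, $-b$, $-c$), so the constant $n$ in the character-sum bound (\ref{bound}) is always at most $2$ and in several subcases (for instance when $k \equiv 0$ or $j \equiv 0$) can be replaced by $1$. Feeding this into Theorem \ref{sieve} and applying the sieve adaptively in successively narrower bands of $\omega(q-1)$, with $l$ chosen to exploit the small prime divisors of $q-1$, I would expect to shrink the 3937 candidate exceptions to a much smaller list, ideally concentrated below $10^5$.

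For the residual candidates, a direct computer search is required. For each surviving $q$ one enumerates parameter triples $(a,b,c)$ with $abc(b-c)\neq 0$ and, for each resulting $f$, scans $\alpha \in \F^{*} \setminus \{-c\}$ for a primitive pair, halting at the first success; this is efficient because the expected density of primitive pairs is on the order of $(\phi(q-1)/(q-1))^{2}$, and each primitivity test costs $O(\omega(q-1)\log q)$ via the standard criterion $\alpha^{(q-1)/p} \neq 1$ for all primes $p \mid q-1$. The parameter space can be further reduced by quotienting out the substitutions $\alpha \mapsto \lambda\alpha$ and $\alpha \mapsto 1/\alpha$, both of which permute primitive elements and act on $(a,b,c)$ in an explicit way.

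The main obstacle is the large gap between what the current sieving machinery can deliver and the conjectured threshold $331$: a naive $q^{3}$-style enumeration is prohibitive at the top of the surviving range $q \approx 3 \times 10^{7}$, and tightening Theorem \ref{sieve} by the factor of $10^{5}$ needed to bring the problem into routine computational range is beyond what the Weil bound and the present sieve can do in isolation. Bridging this gap is the step most likely to require genuinely new input, whether analytic refinements specific to $(1,1)$-functions or a carefully engineered search that replaces exhaustive $(a,b,c)$-enumeration with a targeted procedure guided by the cyclic structure of $\F^{*}$.
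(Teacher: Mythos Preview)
The statement you are addressing is not a theorem in the paper but a \emph{conjecture}; the paper offers no proof of it. After establishing the non-optimal bound $C_{1,1}\le 33093061$ in Theorem~\ref{n2} and computationally verifying the list of genuine exceptions below $10000$, the authors simply propose Conjecture~1 by analogy with the resolved $(2,0)$ case and leave it open. There is therefore no paper proof for your proposal to be compared against.

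As to the proposal itself: it is a sensible research outline rather than a proof, and you correctly identify the central difficulty. Your own final paragraph concedes that bridging the gap between the sieve threshold (around $3\times10^7$) and the conjectured $331$ ``is the step most likely to require genuinely new input,'' so the proposal is, by your own account, incomplete. One small technical point: your claim that the character-sum constant can sometimes be reduced from $2$ to $1$ for $(1,1)$-functions would need careful justification, since in Lemma~\ref{CharBound} the bound depends on the total degree of the distinct irreducible factors of $F$, and for $F(x)=x^k((x+b)/(x+c))^{d/d_2}$ with $b,c\ne 0$ and $b\ne c$ this is generically $3$, giving the factor $n=2$ already used in the paper; any genuine saving here would require a more delicate case analysis than you sketch.
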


We complete this section with some remarks on the set $S$ of exceptional quadratic polynomials, whose members comprise quadratics  of the form $f(x)=a(x+b)^2$, where $ab\neq 0$. In the context of Lemma \ref{CharBound} their irreducible part is of degree 1 and hence the condition of Theorem \ref{main} applies with $n=1.$ Here, if $(\alpha, f(\alpha))$ is primitive, then necessarily $a$ is a non square, in which case it suffices that $\alpha$ is primitive and $a(\alpha+b)^2$ is $L$-free, where $L$ is the odd part of $q-1$.  Denote by $R_{1^2,0}$  the subset of $S$ for which $a$ is a non-square.  By methods of this section this will lead to a better (smaller) lower bound for $C_{1^2,0}$ than the  one shown in Theorem 4.1 for $C_{1,1}$.
	\section{Case n=3, 4 and 5}
	In this section, we demonstrate how to get at least one value $C_n$ for each $n\in \mathbb{N}$ and $n\geq 2$. Further, we provide some calculated values to reduce the bound $C_n$ for $n=3, 4$ and $5$.

    As described above,  Theorem \ref{main} and Lemma \ref{Wbound} together imply that  if $n (37.469)^2 q^{\frac{1}{3}}<q^{\frac{1}{2}}$ then $q\in Q_n$ i.e.  $q>n^6(37.469)^{12}$ implies $q\in Q_n$. Hence, for each $n\in \mathbb{N}, n\geq 2$, one value of $C_n$ is $n^6(37.469)^{12} \approx n^6 \times 7.65713\times 10^{18}$.

    Thus $q> 5.583\times 10^{21}$, $q>3.137\times 10^{22}$ and $q>1.197\times 10^{23}$ imply $q\in Q_3$, $q\in Q_4$, and $q\in Q_5$, respectively. If $\omega(q-1)\geq 18$ then $q\geq 2\times 3\times 5 \times \cdots \times 61 >1.1728 \times 10^{23}$, and if $\omega(q-1)\geq 19$ then $q\geq 2\times 3\times 5 \times \cdots \times 67 > 7.858\times 10^{24} $. Hence $\omega(q-1)\geq 18$ implies  $q\in Q_3$, $q\in Q_4$, and  $\omega(q-1)\geq 19$ implies $q\in Q_5$. The repeated application of Theorem \ref{sieve} (as discussed above in the case $n=2$), with the values in Tables 1, 2 and 3, provide the bounds $C_3\approx 4.426\times 10^8, C_4\approx 7.867\times 10^8$, and $C_5\approx 1.23\times 10^9$, respectively.

	\begin{center}
		$$\text{Table 1}$$
		\begin{tabular}{|m{.6cm}|m{3cm}|m{.7cm}|m{2cm}|m{2.2cm}|m{2.3cm} |}
			\hline
			Sr. No.     &  $a \leq \omega(q-1)\leq b$ & $W(l)$   & $\delta>$   & $\Delta<$ & $3 \Delta W(l)^2$ $<$ \\
			\hline
			1       &  $a=5, b=17$ & $2^5$ & $0.1392719$  & $167.1445296$ & $513468$  \\
			2       & $a=4, b=11$  & $2^4$ & $0.2209872$   & $60.8269154$ & $46716$  \\
			
			3	    & $a=4, b=9$  & $2^4$ & $0.3544689$   & $27.3900959$ & $21036$\\
			
			\hline
		\end{tabular}
	\end{center}
	
	\newpage
	\begin{center}
		$$\text{Table 2}$$
		\begin{tabular}{|m{.6cm}|m{3cm}|m{.7cm}|m{2cm}|m{2.2cm}|m{2.3cm} |}
			\hline
			Sr. No.     &  $a \leq \omega(q-1)\leq b$ & $W(l)$   &  $\delta>$   & $\Delta<$ & $4 \Delta W(l)^2$ $<$ \\
			\hline
			1      &  $a=5, b=17$ & $2^5$ & $0.1392719$  & $167.1445296$ & $684624$  \\
			2      & $a=4, b=11$  & $2^4$ & $0.2209872$   & $60.8269154$ & $62287$  \\
			
			3	    & $a=4, b=9$  & $2^4$ & $0.3544689$   & $27.3900959$ & $28048$\\
			
			\hline
		\end{tabular}
	\end{center}
	\begin{center}
		$$\text{Table 3}$$
		\begin{tabular}{|m{.6cm}|m{3cm}|m{.7cm}|m{2cm}|m{2.2cm}|m{2.3cm} |}
			\hline
			Sr. No.    &  $a \leq \omega(q-1)\leq b$ & $W(l)$   &  $\delta>$   & $\Delta<$ & $5 \Delta W(l)^2$ $<$ \\
			\hline
			1      &  $a=5, b=18$ & $2^5$ & $0.1064850$  & $236.7747170$ & $1212287$  \\
			2      &$a=4, b=11$  & $2^4$ & $0.2209872$   & $60.8269154$ & $77859$  \\
			
			3	    & $a=4, b=9$  & $2^4$ & $0.3544689$   & $27.3900959$ & $35060$\\
			
			\hline
		\end{tabular}
	\end{center}

Note that, similar reduction can be done for each $n$.
All the results of this section can be summarized in the following theorem.
\begin{theorem}\label{cn}
	For each $n\in \mathbb{N}, n\geq 2$, one of the value for $C_n$ is $n^6\times 7.65713\times 10^{18}$. For $n=3, 4$ and $5$ it can be reduced to $4.426\times 10^8, 7.867\times 10^8$ and $1.23\times 10^9$ respectively.
\end{theorem}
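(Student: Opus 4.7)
The plan has two distinct parts. For the universal bound, combine Theorem \ref{main} with Lemma \ref{Wbound}: the latter gives $W(q-1) \leq 37.469\,(q-1)^{1/6}$, hence $nW(q-1)^2 \leq n\,(37.469)^2\,q^{1/3}$, and the hypothesis $q^{1/2} > nW(q-1)^2$ of Theorem \ref{main} is therefore implied by $q^{1/6} > n\,(37.469)^2$, i.e., by $q > n^6\,(37.469)^{12} \approx n^6 \times 7.65713 \times 10^{18}$. This yields the universal value of $C_n$ asserted in the theorem.

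For $n = 3, 4, 5$ the bound is sharpened via the iterative sieving procedure already employed for $n = 2$ in Section 4. The universal bound guarantees $q \in Q_n$ above the stated cutoffs $5.583 \times 10^{21}$, $3.137 \times 10^{22}$, $1.197 \times 10^{23}$ respectively, while the primorial inequality $q \geq 1 + \prod_{i=1}^{\omega(q-1)} p_i$ automatically forces $q$ past this cutoff once $\omega(q-1) \geq 18$ (for $n = 3, 4$) or $\geq 19$ (for $n = 5$). Hence one may restrict to $\omega(q-1)$ strictly below these thresholds.

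Next, apply Theorem \ref{sieve} in successive rounds, one per row of Tables 1, 2, 3. For each row $(a, b, W(l))$, assuming $a \leq \omega(q-1) \leq b$, take $l$ to be the product of the $a$ smallest prime divisors of $q-1$, so $s = \omega((q-1)/l) \leq b - a$. The quantity $\delta = 1 - 2\sum_{i=1}^s 1/p_i$ is minimised in the worst case when the remaining $s$ primes are as small as possible, namely the primes immediately following those dividing $l$; this yields the tabulated values of $\delta$ and $\Delta$. Theorem \ref{sieve} then guarantees $q \in Q_n$ whenever $q^{1/2} > n\Delta W(l)^2$. Combined with the primorial lower bound at $\omega(q-1) = b$ --- which must exceed $(n\Delta W(l)^2)^2$ --- one simultaneously concludes $\omega(q-1) \leq b - 1$ and that $q$ is at most the squared cutoff, feeding into the next row. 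After three rounds one reaches the stated values of $C_3, C_4, C_5$; the residual cases with $\omega(q-1)$ below the lowest row's $a$ are handled directly by Theorem \ref{main}, which produces a much smaller cutoff than the final $C_n$.

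The main technical obstacle is the bookkeeping to ensure that at each stage the cutoff $(n\Delta W(l)^2)^2$ produced by Theorem \ref{sieve} lies strictly below the primorial lower bound $1 + \prod_{i=1}^{b} p_i$ at the top of the current range, so that both the range of $\omega(q-1)$ shrinks and the upper bound on $q$ descends in step. Once the triples $(a, b, W(l))$ are chosen consistently --- which is exactly what Tables 1--3 record --- the chain of implications reduces to arithmetic verification of the tabulated values.
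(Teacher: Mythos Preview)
Your proposal is correct and follows essentially the same route as the paper: combine Theorem \ref{main} with Lemma \ref{Wbound} to obtain the universal bound $C_n = n^6(37.469)^{12}$, then for $n=3,4,5$ use the primorial lower bound on $q$ to cap $\omega(q-1)$ and iterate Theorem \ref{sieve} through the rows of Tables 1--3 exactly as in Section 4. Your explicit remark that the residual range $\omega(q-1)\le 3$ is disposed of directly by Theorem \ref{main} (since then $nW(q-1)^2\le 64n$) is a point the paper leaves implicit, but otherwise the arguments coincide.
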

Theorem \ref{main} and Theorem \ref{cn} together prove the main result of this article stated in Theorem \ref{Main}.

\section*{Acknowledgment}
Prof. R. K. Sharma is Consenys Blockchain Professor.   He wants to thank Consenys AG for the same.

            \bibliographystyle{plain}
	\bibliography{Bibliography}

\end{document}